\documentclass[11pt]{article}

\newtheorem{theorem}{Theorem}

\newtheorem{lemma}[theorem]{Lemma}

\newcommand{\R}{\mathbb{R}}
\newcommand{\M}{\mathcal{M}}

\newcommand{\I}{\mathbb{I}}
\newcommand{\Pp}{\mathbb{P}}

\newcommand{\E}{{\mathbb{E}}}
\newcommand{\N}{{\mathbb{N}}}

\newcommand{\K}{\mathbf{K}}

\newcommand{\pf}{\operatorname{pf}}
\newcommand{\qed}{\vrule height7.5pt width4.17pt depth0pt}

\usepackage{amsmath} 
\usepackage{amssymb}
\usepackage{hyperref} 
\usepackage{graphicx}
\begin{document}

\title{Entrance laws for coalescing and annihilating Brownian motions.}

\date{}

\author{Roger Tribe and Oleg Zaboronski}

\maketitle
\begin{abstract}
Systems of instantaneously annihilating or coalescing Brownian motions on the line are considered. The extreme points of 
the set of entrance laws for this process are shown to be Pfaffian point processes at all times and their kernels are identified.

\end{abstract}


We consider a system of particles moving, between collisions, as independent Brownian motions on  $\R$. 
A pair of particles upon collision instantaneously react, either annihilating each other 
with probability $\theta$ or coalescing into a single particle with probability $1-\theta$. 
Separate collisions produce independent reactions. Thus the parameter 
$\theta \in [0,1]$ interpolates between the purely coalescing case and the purely 
annihilating case.

We will write $X_t$ for the empirical measure of the positions of particles at time $t$. 
A suitable state space for this system $(X_t)$ is $\M$, the space of locally finite
point measures on $\R$ with the topology of vague convergence. The instantaneous reactions mean that
$X_t$ takes values in the subset $\M_0 \subseteq \M$ of simple point processes, that is there is at 
most one particle at any point. The process has a Feller transition density 
$(p_t(\mu, d \nu):t \geq 0)$ and the process has a Markov family of laws 
$(\E_Q: Q \; \mbox{a probability supported on $\M_0$})$. For the coalescing case the 
process can be constructed from finite systems using monotonicity, that is by adding one more initial particle at a time, while the general case requires more care. The details of all these basic properties are, for $\theta = 0,1$, in the Appendix 4.1 to \cite{TZ}. 
For the mixed case $\theta \in (0,1)$, which arises in several settings (polymer chains, multi-valued voter models, coalescent models - see \cite{GPTZ}), these basic results above still hold by 
repeating the arguments for $\theta =1$ from  \cite{TZ} replacing the duality function used there with the 
general $\theta$ duality function (\ref{thetaduality}) below. 

The system is often studied starting from a specific entrance law, informally starting with a particle 
at every $x \in \R$ (for example this is the case with the Arratia flow). The intuition is that the instantaneous reactions bring the system instantaneously into $\M_0$. However a complete 
description of entrance laws requires care. See \cite{hammer2021entrance} where a complete
classification of the entrance laws for $\theta=1$ is found using duality with the continuum voter model. However,
the Pfaffian formulae for the intensities of the one dimensional marginals clarify the picture and 
lead to a simple description of  the complete set of entrance laws, which is the purpose of this note.
The usefulness of this Pfaffian property is illustrated in \cite{fitzgerald2022asymptotic} where Fredholm Pfaffians are used to study empty intervals and exit measures. 

We recall the Pfaffian structure of $X_t$, yielding explicit Lebesgue intensities 
$\rho_t(x_1,\ldots,x_n)$ for $X_t$. These were studied first for $\theta =0$ for the Arratia flow,
and the analogous flow for $\theta=1$, in \cite{TZ}, and then more generally in \cite{GPTZ} and \cite{GTZ} for mixed systems, together with associated systems where certain branching or immigration mechanisms are present. Started from a deterministic initial condition
$\mu \in \M_0$ the point process $X_t$ is, at any fixed time $t>0$, a Pfaffian point process. This means that its intensities are given in terms of a Pfaffian
\begin{equation} \label{rhon}
\rho_t(x_1,\ldots,x_n) = \pf(\K_t(x_i,x_j)) \quad \mbox{for $x_1<x_2< \ldots <x_n$ }
\end{equation}
where the kernel $\K_t: \overline{V}_2 \to M_{2 \times 2}(\R) $, where $V_2 = \{(x,y): x < y\}$,
is constructed from the initial condition $\mu$ as we now recall. Firstly $\K_t$ is
in 'derived form', that it is derived from a scalar kernel $K_t: \overline{V}_2 \to \R$ via the 
relation
\begin{equation} \label{derivedform}
\K_t(x,y) = \frac{1}{1+\theta}
\begin{pmatrix}
K_t(x,y) & -D_x K_t(x,y) \\
-D_yK_t(x,y) & D_{xy}K_t(x,y)
\end{pmatrix}
\quad \mbox{for $t>0$ and $x < y$,}
\end{equation}
and $\K_t(x,x)$ is skew-symmetric with $\K_t(x,x)_{1,2}=-\frac{1}{1+\theta}D_xK_t(x,x)$.
The scalar kernel $K \in C^{1,2}((0,\infty) \times \overline{V}_2)$ 
is the unique bounded solution to the heat equation
\begin{equation}  \label{Kpde} 
\left\{ \begin{array}{rcll}
\partial_t K & = & \Delta K & \mbox{on $(0,\infty) \times V_2$,} \\
K_t (x,x)  & = & 1 &  \mbox{for $x \in \R$,}
\end{array} \right.
\end{equation}
satisfying the initial condition $K_t \to K_0$ in distribution as $t \downarrow 0$ on $V_2$, where 
\begin{equation} \label{uIC}
K_0(x,y)  =   (-\theta)^{X_0(x,y)} \quad \mbox{for $(x,y) \in V_2.$} 
\end{equation}
The proof of this Pfaffian structure is based on the Markov duality formula 
\begin{equation} \label{dualityintro}
\E[ (-\theta)^{X_t(x_1,x_2) + X_t(x_3,x_4) + \ldots +X_t(x_{2n-1},x_{2n}) } ]
= \pf( K_t(x_i,x_j): i,j \leq 2n).
\end{equation}
For coalescing-annihilating random walks this is Lemma $7$ of \cite{GPTZ}.
Expression (\ref{dualityintro}) is the corresponding continuous limit obtained
by following the arguments of Section $3$ of that paper.
Here, and throughout, we write $\mu(a,b)$ (and $\mu(a,b]$ e.t.c.) as shorthand for  $\mu((a,b))$. We also
use the convention that $0^k = \I(k=0)$, so that for instance when $\theta =0$ the expression $(-\theta)^{X_0(x,y)}$ becomes the indicator $I(X_0(x,y)=0)$ that there are no particles inside $(x,y)$. 

Write $(T_t)$ for the Markov semigroup of $(X_t)$ acting on bounded measurable $F:\M_0 \to \R$. 
Recall that an entrance law for $(X_t)$ is a family of laws $(Q_t:t>0) $ on $\M_0$ so that
\begin{equation} \label{elaw}
\int_{\M_0} T_t F(\mu) \, Q_{s}(d \mu) = \int_{\M_0} F(\mu) \, Q_{t+s}(d \mu) \quad \mbox{for all $s,t>0$ and all $F$.} 
\end{equation}
Clearly, entrance laws form a convex set and our aim is classify the extreme points of this set. 
\vspace{.1in}


\noindent
\textbf{Notation.} We write $(K^f_t)$ for the solution to the heat equation (\ref{Kpde}) 
with initial condition $K^f_0=f$.  We write $Q^f = (Q^f_t: t >0)$ for the family of laws on $\M_0$, when it exists,
where $Q^f_t$ is the law of the point process with intensities $\rho^{(n)}_t$ given via (\ref{rhon}) for the kernels 
$\K^f_t$ arising as in (\ref{derivedform}) from the scalar kernel $K^f_t$.

As explained above, starting from a deterministic condition, the law of $(X_t:t >0)$ is given by $Q^f$ with
$ f(x,y) = (-\theta)^{X_0(x,y)}$. The aim
of this note is to show that all entrance laws are mixtures of $Q^f$ for suitable functions $f: V_2 \to \R$.  
%

\begin{theorem} \label{t1}
The extreme elements of the set of entrance laws for $(X_t)$ are $(Q^f: f \in C_{\theta})$ where
$C_{\theta} \subseteq L^{\infty}(V_2)$ is given by 
\[
C_1 =  \left\{ f(x) f(y), (x,y)\in V_2: \mbox{measurable} \; f:\R \to [-1,1] \right\},
\]
and for $\theta \in [0,1)$
\[
C_{\theta} = \left\{ \I(S \cap (x,y) = \emptyset), (x,y)\in V_2: \mbox{closed $S \subseteq \R$} \right\}.
\]
Moreover these laws $Q^f$ are distinct, that is $f \neq g$ implies $Q^f \neq Q^g$.   
\end{theorem}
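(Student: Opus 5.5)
Our plan runs through the duality formula (\ref{dualityintro}). For an entrance law $(Q_t)$ and $x_1<\dots<x_{2n}$ we set
\[
\Phi_t(x_1,\ldots,x_{2n}) = \int \E_\mu\big[(-\theta)^{\sum_k X(x_{2k-1},x_{2k})}\big]\,Q_t(d\mu).
\]
We identify $\Phi$ first. The entrance law property (\ref{elaw}), together with (\ref{dualityintro}) applied from each deterministic $\mu$, shows that $\Phi_{t+s} = \int \pf(K^{\mu}_t(x_i,x_j))\,Q_s(d\mu)$. Let $f_s(x,y) = \int (-\theta)^{\mu(x,y)}Q_s(d\mu)$ be the averaged two-point function. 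It is bounded by $1$ and, by the case $n=1$, solves the heat equation (\ref{Kpde}) in $(s,x,y)$ with $f_s(x,x)=1$. Boundedness gives weak-$*$ compactness in $L^\infty(V_2)$ as $s\downarrow 0$. We therefore extract a limit $f_s \to f_0$ along a subsequence; uniqueness of bounded solutions then gives $f_t = K^{f_0}_t$ for all $t>0$.

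The mixture decomposition comes next. The Pfaffian is not linear in the kernel, so an averaged entrance law need not itself be Pfaffian. We instead disintegrate along $s\downarrow0$. We regard $\mu \mapsto (-\theta)^{\mu(x,y)}$ as a random element of the compact set $B=\{g\in L^\infty(V_2):|g|\le1\}$ with the weak-$*$ topology, and push forward $Q_s$ to a law $R_s$ on $B$. Tightness is automatic, so along a subsequence $R_s\to R$. The map $g\mapsto \pf(K^g_t(x_i,x_j))$ is continuous for the weak-$*$ topology, since $K^g_t$ is a heat-kernel integral against $g$ and is therefore smooth and continuous in $g$. Passing to the limit gives
\[
\Phi_t = \int \pf(K^g_t(x_i,x_j))\,R(dg).
\]
The quantities $\Phi_t$ over all $n$ and all choices of points determine the law $Q_t$, through the intensities obtained by differentiation or through the generating function of interval counts. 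Hence $Q_t=\int Q^g_t\,R(dg)$.

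We then show that $R$ is supported on the weak-$*$ closure $\overline{C}_\theta$ of the possible initial functions $(-\theta)^{\mu(x,y)}$. For $\theta=1$ these are the products $f(x)f(y)$ with $f=\pm1$; their weak-$*$ closure consists of the products $f(x)f(y)$ with $|f|\le1$. To see this, note that products are characterised by the multiplicative relation $g(x,y)g(y,z)=g(x,z)$ with $f$ recovered as a local average, and this relation survives limits only after averaging. This is the step we expect to be the main obstacle. We plan to handle it by working at the level of the heat extensions $K^g_t$ rather than $g$ itself, where convergence is pointwise. For $\theta<1$, $(-\theta)^{\mu(x,y)}$ is $1$ when $\mu(x,y)=0$ and at most $\theta$ in absolute value otherwise. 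We expect limits to collapse to indicators $\I(S\cap(x,y)=\emptyset)$: the Pfaffian structure with $n\ge2$ forces $g(x,y)\in\{0,1\}$ and monotone in the interval, which gives the closed set $S$ as the set of points every interval around which has $g=0$. Here the products from the $\theta=1$ case must be ruled out; they are excluded because the coalescing part kills sign oscillations. We would verify this through the identity $K(x,z)=K(x,y)K(y,z)$ forced on limits when $\theta<1$.

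Conversely, each $Q^f$ with $f\in C_\theta$ is a genuine entrance law. It is the limit of $Q^{f_k}$ with $f_k=(-\theta)^{\mu_k(x,y)}$ for deterministic $\mu_k$ chosen to approximate $f$, for instance dense particles in $S^c$ for $\theta<1$, or alternating densities for $\theta=1$; the entrance law relation (\ref{elaw}) passes to the limit by the Feller property. For distinctness, $Q^f_t$ determines $K^f_t$ via the one-point and two-point data, and $K^f_t\to f$ as $t\downarrow 0$ determines $f$. This also shows that the representing measure $R$ is unique. Extremality of each $Q^f$ then follows because the map from $R$ to $\int Q^g R(dg)$ is injective, and every entrance law is such a mixture.
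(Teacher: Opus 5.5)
Your derivation of the mixture representation follows the paper. You push $Q_s$ forward under $\mu\mapsto s_\mu$ into the weak-$*$ compact unit ball, extract a limit $R$, and pass to the limit using weak-$*$ continuity of $g\mapsto\pf(K^g_t(x_i,x_j))$. This continuity must hold uniformly in $g$ as the time argument $t-s\to t$, which is the paper's condition (ii). Existence of $Q^f$ by approximation and the Feller property is also the paper's argument. The obstacle you anticipated for $\theta=1$ is not real: if $f_n\to f$ weak-$*$ in $L^\infty(\R)$ then $f_n\otimes f_n\to f\otimes f$ weak-$*$ (test against $\phi\otimes\psi$). So closedness of $C_1$ follows by extracting a weak-$*$ convergent subsequence of $(f_n)$.

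The first genuine problem is your description of the closure when $\theta\in(0,1)$. Limits do not collapse to $\{0,1\}$-valued indicators: $s_{\delta_0}(x,y)=(-\theta)^{\I(x<0<y)}$ already takes the value $-\theta$. No Pfaffian argument can exclude it, since it comes from a deterministic initial condition. The closure is $\tilde C_\theta$ of Lemma~\ref{closure}, with weights $w(a)\in\N\cup\{\infty\}$ on the isolated points $a$ of $S$; for example, $k$ particles merging at $a$ give $(-\theta)^{k\I(x<a<y)}$. You therefore need $Q^g$ and (\ref{duality}) for every $g\in\tilde C_\theta$, not only for indicators. (Incidentally, indicators are approximated by particles accumulating on $S$, not dense in $S^c$.)

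The second problem is that your final step is a non sequitur. Distinctness of the $Q^f$, i.e.\ injectivity of $f\mapsto Q^f$, is all your one- and two-point argument gives. It does not yield uniqueness of the representing measure, i.e.\ injectivity of $R\mapsto\int Q^g\,R(dg)$. For $\theta\in(0,1)$ that uniqueness actually fails on $\tilde C_\theta$. With $p_k=(\theta+(-\theta)^k)/(1+\theta)$ one has $Q^{f_k}=p_kQ^{f_0}+(1-p_k)Q^{f_1}$, where $f_j$ carries weight $j$ at $a$. This holds because the initial data of the $2n$-point duality equations add up via $(-\theta)^k=p_k+(1-p_k)(-\theta)$.

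What is missing is the paper's final step. First prune the labels carrying a weight $\geq2$ using this decomposition. Then prove extremality of the remaining labels directly, by letting $t\downarrow0$ in (\ref{duality}) for a putative decomposition and comparing initial data. For $\theta=1$ this needs all the $2n$-point functions, giving $f(x)^{2n}=p\,g(x)^{2n}+(1-p)h(x)^{2n}$ a.e.; two-point data are not enough.

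Be aware also that the case $k=\infty$, with $p_\infty=\theta/(1+\theta)$, shows that for $\theta\in(0,1)$ the indicator $\I(a\notin(x,y))$ is itself a non-trivial mixture: either nothing, or one Brownian particle started from $a$. What survives the pruning is the weight-one family $\I(S_c\cap(x,y)=\emptyset)(-\theta)^{\#(S_i\cap(x,y))}$. This coincides with the indicator family only when $\theta=0$, and the paper's own proof glosses over this identification. So for $\theta\in(0,1)$ a plan aimed at showing that the indicators are exactly the extreme labels cannot be completed.
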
  
The duality function used to analyse mixed systems is, for $\theta \in [0,1]$ and $\mu \in \M_0$,  
\begin{equation} \label{thetaduality}
 s_{\mu}(x,y) :=  (-\theta)^{\mu(x,y)} \quad \mbox{for $(x,y) \in V_2.$} 
\end{equation}
Here is the key underlying lemma, whose proof is delayed until the end of this note. 
\begin{lemma}  \label{closure}
The weak-$*$ closure in $L^{\infty}(V_2)$, as dual to
$L^1(V_2)$, of the set
\[
S_\theta= \left\{ s_{\mu}(x,y), (x,y)\in V_2: \mu = \sum_{k=1}^n \delta_{x_k} \in \M_0, n \geq 0\right\}
\]
of finite spin functions
is $\tilde{C}_{\theta}$, where 
$\tilde{C}_{1}= C_1$ and for $\theta \in [0,1)$ it is the set
\[
\tilde{C}_{\theta}  =  \left\{  \I (S_c \cap (x,y)  = \emptyset) (-\theta)^{\sum_{z \in S_i \cap(x,y)} w(z)}: 
\mbox{closed $S \subseteq \R$},  \; w:S_i \to \N \cup \{\infty\} \right\}
\]
where $S = S_i \cup S_c$ is the disjoint decomposition of a closed set $S$ into its isolated points $S_i$ and its 
cluster points $S_c$. 
\end{lemma}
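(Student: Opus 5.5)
We prove the two inclusions separately: first that every weak-$*$ limit of finite spin functions lies in $\tilde{C}_\theta$, then that every element of $\tilde{C}_\theta$ is such a limit. Throughout we use that the unit ball of $L^\infty(V_2)$ is weak-$*$ compact and metrizable on bounded sets, since $L^1(V_2)$ is separable. So it suffices to work with sequences $s_{\mu_n}\to g$ weak-$*$.

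\emph{Case $\theta=1$.} Here $s_\mu(x,y)=(-1)^{\mu(x,y)}=f_\mu(x)f_\mu(y)$, where $f_\mu(z)=(-1)^{\mu(-\infty,z]}$ (up to null sets, and possibly an overall sign). Hence $S_1\subseteq C_1$.

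For the inclusion $\overline{S_1}\subseteq C_1$: $C_1$ is weak-$*$ closed. Given $f_n(x)f_n(y)\to g$, pass to a subsequence with $f_n\to f$ weak-$*$ in $L^\infty(\R)$. Products of weak-$*$ convergent sequences do not converge in general, so we argue differently. Tensor products $f_n\otimes f_n$ tested against $a\otimes b$ give $\langle f_n,a\rangle\langle f_n,b\rangle\to\langle f,a\rangle\langle f,b\rangle$. Since linear combinations of $a\otimes b$ are dense in $L^1(V_2)$ and the sequence is bounded, this gives $g=f\otimes f$ on $V_2$.

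For the reverse inclusion $C_1\subseteq\overline{S_1}$: every measurable $f:\R\to[-1,1]$ is a weak-$*$ limit of $\pm1$-valued step functions with finitely many jumps locally. Concretely, on a fine mesh choose alternating signs with local average $f$; the tensor argument above then passes the convergence to $f\otimes f$. To get finitely many jumps overall, truncate outside $[-N,N]$ using a constant sign, and use a diagonal argument.

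\emph{Case $\theta<1$, inclusion $\overline{S_\theta}\subseteq\tilde{C}_\theta$.} Let $s_{\mu_n}\to g$. Pass to a subsequence along which $\mu_n\to\nu$ vaguely as measures with values in $\N\cup\{\infty\}$. Equivalently, the supports converge in the Fell topology to a closed set $S$, and near each isolated point $z\in S$ the local mass converges to some $w(z)\in\N\cup\{\infty\}$, where $\infty$ means the mass diverges.

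For $(x,y)$ with $x,y\notin S$, we then show $s_{\mu_n}(x,y)\to\I(S_c\cap(x,y)=\emptyset)(-\theta)^{\sum w}$ pointwise:
\begin{itemize}
\item If $(x,y)$ meets a cluster point of $S$, the number of points of $\mu_n$ in $(x,y)$ tends to infinity, so $\theta^{\mu_n(x,y)}\to0$. For $\theta=0$ the value is already $0$.
\item If $(x,y)$ meets no cluster point, then $S\cap[x,y]$ is finite and each count converges.
\end{itemize}
This gives almost everywhere convergence, provided $\partial S$-type sets are null. Closed sets can have positive measure, however, and this is the main obstacle. We handle it by first taking a subsequence along which the Fell limit $S$ exists, and then using that every point of positive-measure parts of $S$ is a cluster point. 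Near such a point $\mu_n$ has many atoms, so any $(x,y)$ strictly containing it in its interior gives limit $0$; only endpoints on $S$ are problematic, and that set is null in $V_2$ because it is a product-type set of measure zero only if $S$ is null. So instead we argue via the $L^1$ pairing: for $x\in S_c$ with density, points $(x',y)$ with $x'<x$ close by give $0$. Bounded convergence on the full-measure set where the limit is determined then yields $g$ in $\tilde{C}_\theta$, once $g$ is defined as the displayed form. Dominated convergence upgrades almost everywhere convergence to weak-$*$ convergence.

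\emph{Case $\theta<1$, inclusion $\tilde{C}_\theta\subseteq\overline{S_\theta}$.} We approximate by construction:
\begin{itemize}
\item A finite weight $w(z)$ is realised by $w(z)$ atoms at distinct points converging to $z$.
\item Weight $\infty$ is realised by $n$ atoms converging to $z$.
\item A cluster point set $S_c$ is realised by placing $n$ atoms in each cell of a $1/n$-mesh meeting $S_c$, intersected with $[-n,n]$.
\end{itemize}
Pointwise almost everywhere convergence follows, with the same care near $S$ as above, and dominated convergence finishes the argument.
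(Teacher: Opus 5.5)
Your $\theta=1$ argument is the paper's: you use the factorisation $s_\mu=f_\mu\otimes f_\mu$, prove closedness of $C_1$ by extracting a weak-$*$ convergent subsequence of the one-variable factors and testing against tensors, and get density from $\pm1$ step functions matching local averages.

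For $\theta<1$ your route differs. The paper identifies $\tilde C_\theta$ with $\{s_\mu:\mu\in\M_s\}$, approximates by perturbing and truncating atoms, and proves closedness by splitting each $\mu_n$ into single-atom pieces and extracting diagonally. You instead take a Fell-convergent subsequence of supports, together with convergent local masses at the isolated points of the limit set $S$, and build approximants directly from $(S,w)$. This is a sound choice. It controls the counts $\mu_n(x,y)$ directly, and it avoids the bookkeeping in the paper's extraction, where the single-atom pieces must be indexed so that their limits recover the whole limiting support. Your pointwise analysis for $x,y\notin S$ is correct.

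The gap is at the step you call the main obstacle. When $S$ has positive Lebesgue measure, the set of $(x,y)\in V_2$ with $x\in S$ or $y\in S$ has positive measure, and you give no valid argument for convergence there. The sentence saying this set is null ``only if $S$ is null'' undercuts itself. The remark that pairs $(x',y)$ with $x'<x$ give $0$ concerns other points, not $(x,y)$ itself. Your constructive direction then appeals to ``the same care'', so both inclusions rest on this unproved step. The paper passes over this point silently, but your proof is built on a.e.\ convergence and must supply it.

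The missing observation is short. Let $E$ be the set of $z\in S$ such that $(z,z+\delta)\cap S=\emptyset$ or $(z-\delta,z)\cap S=\emptyset$ for some $\delta>0$. Each such $z$ is an endpoint of a component of $\R\setminus S$, so $E$ is countable.

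If $x\in S\setminus E$ and $y>x$, then $(x,y)$ contains infinitely many points of $S$. Every point of $S$ is a limit of atoms of $\mu_n$ (Fell convergence), so $\mu_n(x,y)\to\infty$ and $s_{\mu_n}(x,y)\to 0$. The target also vanishes, because $(x,y)$ either meets $S_c$ or contains infinitely many isolated points, each of weight at least $1$. The same holds if $y\in S\setminus E$. Hence convergence holds off $\{x\in E\}\cup\{y\in E\}$, which is a null set.

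In the constructive direction you must also say which finitely many isolated points are used at stage $n$. For instance, take those in $[-n,n]$ at distance at least $1/n$ from $S_c$ (a finite set), and place their atoms within $1/n$ of them. With that choice the same argument gives divergence of the counts at such $(x,y)$, and the correct limit when $x,y\notin S$.
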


\textbf{Remarks.} \textbf{1.} The superset $\tilde{C}_{\theta} \supseteq C_{\theta}$, when $\theta \in [0,1)$, will label
entrance laws (via the map $f \to Q^f$); however only the set $C_{\theta}$ will label extremal entrance laws. 

\noindent
\textbf{2.} All the functions here lie in the unit ball $B_1 = \{f:V_2 \to [-1,1]\}$ and the 
weak-$*$ topology is metrizable on this ball (since $L^1(V_2)$ is separable). 

\textbf{Proof of Theorem.} 
We first check that $Q^f$, for $f \in \tilde{C}_{\theta}$, do form entrance laws. 
We follow the steps from \cite{TZ} where an entrance law for the cases $\theta \in\{0,1\}$ when $f \equiv 0$ was constructed (called there a 'maximal' entrance law and informally corresponding to starting a particle at every point in $\R$ as for the Arratia flow). 
Fix $f \in \tilde{C}_{\theta}$. By Lemma \ref{closure} we can choose a sequence
$(\mu_n)$ 
so that $s_{\mu_n} \to f$ weak-$*$. 
Let $(X_t^{(n)})$ be the corresponding particle system with
 the initial condition $\mu_n$. At a fixed $t>0$, the corresponding scalar kernels $K^{(n)}_t$, solving 
 (\ref{Kpde}) with initial condition $K^{(n)}_0 = s_{\mu_n}$, are given, for $(x,y) \in \overline{V}_2$, by
\begin{equation} \label{scalarkernel}
K^{(n)}_t(x,y) = 1 + \int_{V_2} (g_t(x-x',y-y') - g_t(y-x',x-y')) (s_{\mu_n}(x',y')-1) dx' dy'
\end{equation}
where $g_t(x,y) = (1/4\pi t) \exp(-(x^2+y^2)/4t)$, $x,y\in \R^2$. 
Using this one sees that $K^{(n)}_t$,
 together with their derivatives
 $D_x K^{(n)}_t, D_y K^{(n)}_t, D_{xy} K^{(n)}_t$, converge bounded pointwise to $K^f_t$ and its associated derivatives. This convergence of the kernels $\K^{(n)}_t$ implies that the associated Pfaffian point processes $X_t^{(n)}$ converge in law to a limiting point process $X_t$ with law $Q_t^f$ 
(see Lemma 10 in \cite{GPTZ}).  Moreover the Markov duality formula extends to hold for the limit, that is when $X_t$ has law $Q^f_t$ 
then 
\begin{equation} \label{duality}
\E[ (-\theta)^{X_t(x_1,x_2) + X_t(x_3,x_4) + \ldots +X_t(x_{2n-1},x_{2n}) } ]
= \pf( K^f_t(x_i,x_j): i,j \leq 2n)
\end{equation}
when $t>0, x_1<x_2 < \ldots <x_{2n}$. Note that this formula implies the distinctness claimed in the theorem; if $f \neq g$ then (at least for small
$t>0$) the kernels $K^f_t$ and $K^g_t$ will be distinct and the duality formula shows that the laws $Q^f$ and $Q^g$ are not equal.  
 
 Passing to the limit $n \to \infty$ in the semigroup property for $(X_t^{(n)})$ for bounded continuous $F$
 \[
 \int_{\M_0} T_t F(\mu) \, \Pp[X^{(n)}_s \in d \mu]  = \int_{M_0} F(\mu) \, \Pp[X^{(n)}_{t+s} \in d \mu],
 \]
and using the Feller property to see that $T_t F$ is still continuous,
we see that $(Q^f_t:t >0)$ satisfies the entrance law equation (\ref{elaw}), finishing the proof that it is an entrance law. 

 We now fix any entrance law $(Q_t: t>0)$ and show that it is a mixture of the 
 entrance laws $(Q^f_t: t>0)$ constructed above. 
The entrance law equation (\ref{elaw}) gives for any $0<r<t$
\begin{equation} \label{extreme10}
\int_{\M_0} F(\mu) Q_t(d \mu) = \int_{\M_0} T_{t-r} F(\mu) Q_r(d \mu) = 
\int_{B_1}  \int_{\M_0} F(\nu) Q^f_{t-r}(d \nu) Q_r(\{\mu: s_\mu \in df\})
\end{equation}
where we have used the fact that started from $\mu$ the law of $X_{t-r}$ is $Q^f_{t-r}$ for 
$f = s_{\mu}$. 
The pushforward law $Q_r(\{\mu: s_\mu \in df\})$ on $L^{\infty}(V_2)$ is supported on 
the unit ball $B_1 \subseteq L^{\infty}(V_2)$ since 
$|s_{\mu}| \leq 1$. The unit ball $B_1$ is weak-$*$ compact by Alaoglu's Theorem (and metrizable as remarked above).
Thus the space of probability measures on $B_1$ is itself metrizable and compact (using weak convergence of measures) 
and there is a sequence $r_n \to 0$ along which the limit 
\[
\Theta(df)  := \lim_{n \to \infty} 
Q_{r_n}(\{\mu: s_\mu \in df\})
\]
exists. Since $Q_{r_n}$ is supported on $\tilde{C}_\theta$ which is weak-$*$ closed, the limit measure $\Theta$ is supported on $\tilde{C}_{\theta}$. 
We now aim to pass to the limit $r_n \downarrow 0$ in (\ref{extreme10}) to produce
\begin{equation} \label{choquet}
\int_{\M_0} F(\mu) Q_t(d \mu) = 
\int_{\tilde{C}_{\theta}} \int_{\M_0} F(\nu) Q^f_{t}(d \nu) \Theta(df)
\end{equation}
This will be true provided both (i) $f \mapsto \int_{\M_0} F(\nu) Q^f_{t}(d \nu)$ is a bounded continuous function and 
(ii) $
\int_{\M_0} F(\nu) Q^f_{t-r}(d \nu) \mapsto \int_{\M_0} F(\nu) Q^f_{t}(d \nu)  \mbox{ uniformly over $B_1$
as $r \downarrow 0$.}
$
We will check (i) and (ii) for a law determining set of functions $F$, which implies that 
\begin{equation} \label{choquet2}
Q_t(d \nu) = \int_{\tilde{C}_{\theta}} Q^f_{t}(d \nu) \Theta(df)
\end{equation}
showing that any entrance law is a mixture as we hoped. 
We choose, for our choice of function $F$ in (\ref{choquet}), a Laplace functional defined, when $\nu = \sum_i \delta_{x_i} $, by  
$F_{\phi} (\nu) = \prod_i (1- \phi(x_i))$, for $\phi: \R \to [0,1)$ continuous and compactly supported. Then   
$\int F(\nu) Q^f_{t}(d \nu)$ will be a Fredholm Pfaffian, 
indeed 
\[
\int_{\M_0} F_{\phi}(\mu)  Q^f_t(d \mu) = 1 + \sum_{k=1}^{\infty} \frac{(-1)^k}{k!} \int_{\R^k} \prod_{i=1}^k  
\phi(x_i)
 \pf( \K^f_t(x_i,x_j): i,j \leq k) dx_1 \ldots dx_k.
\]
Each term in the series is a continuous function of $f \in B_1$ (see the explicit formula for $K_t(x,y)$ in (\ref{scalarkernel}))
and the infinite series can be controlled using the Hadamard bound on determinants:
\[
| \pf( \K_t(x_i,x_j): i,j \leq k) | = | \det( \K_t(x_i,x_j): i,j \leq k) |^{1/2} \leq \|\K_t\|_{\infty}^k (2k)^{k/2}
\]
and that $K^f_t$ and its derivatives are all bounded functions at $t>0$. 
Similar estimates, using the boundedness of time derivatives at $t>0$, establish the convergence in condition (ii) above. 

The representation (\ref{choquet2}) is not quite a Choquet representation for elements of a convex set, but 
it remains to identify the extremal elements of $(Q^f: f \in \tilde{C}_{\theta})$. We first explain the intuition of why, when $\theta \in [0,1)$ and $k \geq 2$, the entrance law $Q^{f_k}$ corresponding to $f_k (x,y) = (-\theta)^{k \, \I(x<0<y)}$ is not extremal. This should correspond to the limit of processes with $k$ particles at time $0$ with positions converging to $0$. These $k$ initial particles will very quickly react and, when the dust has settled, there will be either $0$ particles remaining with probability $p_k$, or $1$ particles remaining
with probability $1-p_k$. An easy calculation gives 
\[
p_k = (\theta+(-\theta)^k)/(1+\theta).
\]
Thus we should expect 
$ Q^{f_k} = p_k  Q^{f_1} + (1-p_k) Q^{f_0}$. The same happens for any $f \in \tilde{C}_{\theta}$ with a weight $w(a) \geq 2$ at an isolated point $a$. There turn out to be no other examples of non extremal entrance laws. Moreover, as we now check, all this can be easily verified using the
Pfaffian structure.

Suppose first that $\theta=1$. Take $\hat{f} = f \otimes f, \hat{g} = g \otimes g, \hat{h} = h \otimes h \in C_1$ and suppose 
\[
Q^{\hat{f}} = p Q^{\hat{g}}  + (1-p) Q^{\hat{h}} \quad \mbox{for some $p \in (0,1)$.}
\]
 The duality formula (\ref{duality}) shows that $\pf(K_t^{\hat{f}}) = p \pf(K^{\hat{g}}_t) + (1-p) \pf(K_t^{\hat{h}})$. 
 Letting $t \downarrow 0$, and using the fact that $\pf( f(x_i)f(x_j): i, j \leq 2n) = f(x_1) f(x_2) \ldots f(x_{2n})$, 
implies that 
\begin{equation} \label{equality}
\prod_{i=1}^{2n} f(x_i) = p \, \prod_{i=1}^{2n} g(x_i) + (1-p) \prod_{i=1}^{2n} h(x_i) \quad \mbox{for almost all $x_1<x_2< \ldots < x_{2n}$.}
\end{equation}
If $f,g,h$ were continuous functions we could take limits to conclude for all $x \in \R$ that 
\begin{equation} \label{equality2}
f(x)^n = p \, g(x)^{2n} + (1-p) h(x)^{2n} \quad \mbox{for all $n \geq 1$.}
\end{equation}
For only measurable $f,g,h$ we can apply Lusin's Theorem to conclude that (\ref{equality2}) still holds for almost all $x \in \R$. 
The only solution to this set of equations is $g(x) = \pm f(x)$ and $h(x) = \pm f(x)$ for almost all $x \in \R$, which implies that 
$\hat{f}=\hat{g}=\hat{h}$ establishing the extremality.

Now suppose $\theta \in [0,1)$. The entrance laws corresponding to $f \in \tilde{C}_{\theta}$ which have weights $w \equiv 1$ are extreme. They can be written as $f_S(x,y) = \I(S \cap (x,y) = \emptyset)$ where $S \subseteq \R$ is a closed set. Suppose 
$Q^{f_S}  = p Q^{f_{S_1}} + (1-p) Q^{f_{S_2}}$ for some $p \in (0,1)$ and closed sets $S,S_1,S_2$. As above, taking $t \downarrow 0$ in 
 the duality formula (\ref{duality}) when $n=1$, we reach
 \[
\I(S \cap (x,y) = \emptyset) = p \I(S_1 \cap (x,y) = \emptyset) + (1-p) \I(S_2 \cap (x,y) = \emptyset) \quad \mbox{for almost all $x,y$.}
 \]
Fix $ z \in S$. Then $\I(S \cap (x,y) = \emptyset) =0$ for almost all $x<z<y$ which implies
 $\I(S_1 \cap (x,y) = \emptyset) =\I(S_1 \cap (x,y) = \emptyset)=0$ for almost all $x<z<y$, and thus that $z \in S_1 \cap S_2$.
 Conversely,  if $z \in S^c$ and $d(z,S) > \epsilon$ then $\I(S \cap (x,y) = \emptyset) =1$ for almost all $z-\epsilon <x<y< z+\epsilon$ which implies
 $\I(S_1 \cap (x,y) = \emptyset) =\I(S_1 \cap (x,y) = \emptyset)=1$ for almost all  $z-\epsilon <x<y< z+\epsilon$ , and thus that $z \in S^c_1 \cap S^c_2$. We conclude that $S=S_1=S_2$ as desired. 

Finally we check that the entrance laws corresponding to $f \in \tilde{C}_{\theta}$ which have at least one isolated point $a$ with weight 
$w (a) \geq 2$ are not extreme. As suggested above, we claim that if 
$f_k(x,y) = \I (S_c \cap (x,y)  = \emptyset) (-\theta)^{\sum_{z \in S_i \cap(x,y)} w(z)}$ where 
$w(a) =k \geq 2$ then $Q^{f_k} = p_k Q^{f_1} + (1-p_k) Q^{f_0}$. It is enough to check that the duality formulae (\ref{duality}) 
add up correctly at all $t>0$, since these formulae determine the laws of a simple point process at a fixed time $t>0$. We use that the expectations 
$u^{(2n)}_t(x_1,\ldots,x_{2n}) = \E[ (-\theta)^{X_t(x_1,x_2) + X_t(x_3,x_4) + \ldots +X_t(x_{2n-1},x_{2n}) } ]$ 
are the unique bounded solutions to the linear p.d.e.'s: for $n \geq 1$
\begin{eqnarray*}
\partial_t u^{(2n)}_t(x) & = & \frac12 \Delta u^{(2n)}_t(x)  \quad \mbox{when $x_1<x_2< \ldots < x_{2n}$},\\
u^{(2n)}(x) & = &  u^{(2n-2)}_t(x\setminus \{x_k,x_{k+1}\}) \quad \mbox{when $x_1< \ldots x_k = x_{k+1} < \ldots <x_{2n}$},
\end{eqnarray*}
supplemented with the appropriate initial condition. The equation and the boundary conditions follow from
those for the $n=2$ case (\ref{Kpde}) and the Pfaffian formula  (\ref{duality}) for $u_t^{(2n)}$.
Thus we need only check that the initial conditions for these p.d.e.'s add up, that is 
\[
f_k(x_1,x_2) \ldots f_k(x_{2n-1},x_{2n}) = p_k f_0(x_1,x_2) \ldots f_k(x_{2n-1},x_{2n}) + (1-p_k) f_1(x_1,x_2) \ldots f_k(x_{2n-1},x_{2n}).
\]
However this just reduces to the simple identity $(-\theta)^k = p_k + (1-p_k) (-\theta)$. 
\qed 
 
 \textbf{Proof of Lemma \ref{closure}.} We argue separately for the cases $\theta =1$ and $\theta \in [0,1)$.  We will show  
(i) the closure of $S_\theta$ must contain $\tilde{C}_{\theta}$ and (ii) that $\tilde{C}_{\theta}$ are weak-$*$ closed. 

\textbf{Case $\theta =1$.}  In this case, for $\mu \in \M_0$, the spin function factorises $s_{\mu}(x,y) = \hat{s}_{\mu}(x) \hat{s}_{\mu}(y)$, for Lebesgue almost all $(x,y)$, where
\[
\hat{s}_{\mu}(x) = \left\{ 
\begin{array}{ll} 
(-1)^{\mu(0,x)} & \mbox{ for $x \geq 0$,} \\
(-1)^{\mu(x,0)} & \mbox{ for $x <0$.}
\end{array} \right.
\]
Indeed the factorization is an equality provided $x$ and $y$ avoid the support of $\mu$. 
Fix measurable $f:\R \to [-1,1]$.
We will construct below a sequence $(\mu_n: n \geq 1)$ of finite simple point measures so that 
$\hat{s}_{\mu_n} \to f $ using weak-$*$ convergence in $L^{\infty}(\R)$.
We write $f \otimes g$ for the function defined by $f \otimes g(x,y) := f(x) g(y)$.
Then the factorisation implies that 
$s_{\mu_n} \to f \otimes f$ using weak-$*$ 
convergence in $L^{\infty}(V_2)$ or in $L^{\infty}(\R^2)$.
Thus the weak-$*$ closure of $S_\theta$ contains $\tilde{C}_1$.

Define an approximation $f_n$ to $f$, for $n \geq 1$, by  
$f_n (x) = 1$ for $x \not \in [-n,n)$ and 
\[
f_n(x) = \left\{ 
\begin{array}{ll} 
1 & \mbox{ for $x \in [k/n, a_{k,n})$,} \\
-1 & \mbox{ for $x \in [a_{k,n}, (k+1)/n)$,}
\end{array} \right.
\quad \mbox{for $k = -n^2, \ldots, n^2-1$}
\]
where $a_{k,n}$ are chosen so that $\int^{(k+1)/n}_{k/n} f(x) dx = \int^{(k+1)/n}_{k/n} f_n(x)dx$.
Note $f_n = \hat{s}_{\mu_n}$ (almost everywhere) for a finite measure $\mu_n \in \M_0$. 
Fix $\phi \in L^1(\R)$ and $\epsilon >0$. Choose $\tilde{\phi}$ smooth 
compactly supported so that $\|\phi - \tilde{\phi}\|_{L^1} \leq \epsilon$. Then, if $\tilde{\phi}$ is supported in 
$[-L,L]$, 
\begin{eqnarray*}
|(f_n - f, \phi)| & \leq  &  |(f_n - f, \tilde{\phi})| + |(f_n - f, \tilde{\phi} - \phi)| \\
& \leq  &  |(f_n - f, \tilde{\phi})| + 2 \epsilon \\
& = & \sum_k |\int^{(k+1)/n}_{k/n} (f_n(x) - f(x))(\tilde{\phi}(x) - \tilde{\phi}(k/n)) dx| 
+ 2 \epsilon \\
& \leq & \frac{2(L+1)n}{n^2} \|\tilde{\phi}'\|_{\infty} + 2 \epsilon.
\end{eqnarray*}
This establishes the desired weak-$*$ convergence $\hat{s}_{\mu_n} = f_n  \to f$. 

To show that $\tilde{C}_1$ is closed we suppose $f_n \otimes f_n \to \psi$ weak-$*$ in $L^{\infty}(V_2)$ as $n \to \infty$.
By weak-$*$ compactness of the unit ball, we may choose a subsequence $n_k$
where $f_{n_k} \to f_{\infty}$ weak-$*$ in $L^{\infty}(\R)$ as $k \to \infty$. Then $f_{n_k} \otimes f_{n_k} \to f_{\infty} \otimes f_{\infty}$ 
weak-$*$ in $L^{\infty}(\R^2)$, and this identifies the limit point $\psi$ in the product form $\tilde{\psi} = 
 f_{\infty} \otimes f_{\infty}$ as desired.

\textbf{Case $\theta \in [0,1)$.} 
Let $\M_s$ be the set of $s$-finite counting measures, that is $\mu \in \M_s$ if it is a countable sum 
of finite point measures on $\R$.  We will below identify the weak-$*$ closure of $S_{\theta}$ 
as
\begin{equation}
\tilde{C}_{\theta} = \left\{ s_{\mu}(x,y): \mu \in \M_s \right\} \label{M_s}
\end{equation}
where $s_{\mu}$ is still defined as in (\ref{thetaduality}), understanding that $(-\theta)^{\infty}=0$. 
The representation (\ref{M_s}) of a limit points is not unique; but the function
$s_{\mu}$ is uniquely identified via the closed support $S=S_c \cup S_i$ of $\mu$ and the mass
$\mu(\{a\})$ of any isolated point $a \in S_i$. 
Indeed, noting that the isolated points $S_i$ are locally finite,
for  $(x,y) \in V_2$,
\[
 s_{\mu}(x,y) =  (-\theta)^{\mu(x,y)} = \left\{ \begin{array}{ll}
 0 & \mbox{if $(x,y) \cap S_c \neq \emptyset$,} \\
 (-\theta)^{\sum_{a \in S_i \cap(x,y)} \mu(\{a\})} & \mbox{if $(x,y) \cap S_c \neq \emptyset$.}
 \end{array} \right.
\]
Thus the set agrees with the formula for $\tilde{C}_{\theta}$ given in Lemma \ref{closure} when $w(a) = \mu(\{a\})$. Moreover, the new formula, indexed by $S$ and $w$, gives distinct functions $s_{\mu}(x,y)$ in $L^2(V_2)$, 
so that this identification is bijective. 

To establish (\ref{M_s}) we may write any $\mu \in \M_s$ as $\mu =\sum_{k=1}^{\infty} \delta_{x_k}$, where
the positions $(x_k)$ are not necessarily disjoint. We can find
$\sum_{k=1}^{\infty} \delta_{x_{k,n}}$ where the sequences $(x_{k,n}: k \geq 1)$ do have disjoint elements and where, for each $k \geq 1$ we have $\lim_{n \to \infty} x_{k,n} = x_k$.  We set 
$\mu_n = \sum_{k=1}^n \delta_{x_{k,n}}$. Then $s_{\mu_n} \to s_{\mu}$ weak-$*$ showing that $\tilde{C}_{\theta}$ is contained in the weak-$*$ closure. 
To see that the set $\tilde{C}_{\theta} $ is itself weak-$*$ closed we suppose that
$\mu_n \in \M_s$ and $s_{\mu_n} \to \psi$ weak-$*$. For each $n$ the measure $\mu_n$ can be written
as $\mu_n = \sum_{k=1}^{\infty} \mu_{k,n}$ where
each $\mu_{k,n}$ is either zero or a single point mass supported in $[-k,k]$. Then by diagonalisation we can find a sub-sequence $n'$ along which the measures $\mu_{k,n'}$ are weakly convergent for all $k \geq 1$. Set
$\mu_{k,\infty} = \lim_{n' \to \infty} \mu_{k,n'}$ and $\mu_{\infty} = \sum_k \mu_{k,\infty} \in \M_s$. Then $s_{\mu_{n'}} \to s_{\mu_{\infty}}$ weak-$*$, confirming that $\tilde{C}_{\theta}$ is closed. \qed

\bibliographystyle{abbrv}
\bibliography{Elaw_bib}

\end{document}